\newtheorem{theorem}{Theorem}[section]
\newtheorem*{theorem*}{Theorem}
\newtheorem{lemma}[theorem]{Lemma}
\newtheorem{corollary}[theorem]{Corollary}
\newtheorem{proposition}[theorem]{Proposition}
\theoremstyle{definition}
\newtheorem{definition}[theorem]{Definition}
\providecommand{\customgenericname}{}
\newcommand{\newcustomtheorem}[2]{%
	\newenvironment{#1}[1]
	{%
		\renewcommand\customgenericname{#2}%
		\renewcommand\theinnercustomgeneric{##1}%
		\innercustomgeneric
	}
	{\endinnercustomgeneric}
}
\begin{document}

\title{The Hausdorff and packing measure of some digital expansions}

\author{Daniel Ingebretson}

\begin{abstract}
We compute the exact Hausdorff and packing measure of sets of real numbers whose digital expansions in a given base are missing the digits beyond a given threshold.
\end{abstract}

\maketitle 

\section{Introduction}
Fix an integer $ l \geq 2 $.
Each $ x $ in the interval $ [0,1] $ has an infinite digital expansion 
$$ 
x = \sum_{k =1}^{\infty} \frac{a_k}{l^k} 
$$ 
in base $ l $ (also called an $ l $-ary expansion), where $ a_k \in \{0, 1, \ldots, l-1 \} $ are the digits of $ x $.
For each proper subset $ A \subset \{0,1,\ldots,l-1\} $, we may then consider the restricted digit set 
$$ 
C_A = \left\{ \sum_{k =1}^{\infty} \frac{a_k}{l^k} : a_k \in A \text{ for all } k \right\}.
$$
This set is invariant under the map $ x \mapsto lx \text{ (mod 1)} $ which preserves Lebesgue measure and is ergodic, so its Lebesgue measure is zero.

From classic results of Besicovitch \cite{Bes} and Eggleston \cite{Egg} (see also \cite{Bil}), the Hausdorff dimension of $ C_A $ is $ s = \log \# A / \log l $.
In fact, all notions of fractal dimension-- including the packing dimension-- coincide and equal this value.
Moreover, the $ s $-dimensional Hausdorff measure $ \mathcal{H}^s $ and packing measure $ \mathcal{P}^s $ of $ C_A $ are positive and finite (see e.g. Theorem 2.7 from \cite{Fal2}).

This leads to a natural question; what are the exact values of $ \mathcal{H}^s(C_A) $ and $ \mathcal{P}^s(C_A) $?
For general $ A $ this is open, but some cases are known.
For example, when $ l = 3 $ and $ A = \{0,2\} $ this is the usual ternary Cantor set, with dimension $ s = \log 2 / \log 3 $.
Falconer \cite{Fal} showed that the Hausdorff measure is equal to one, and Feng, Hua and Wen \cite{FHW} found that the packing measure is $ 4^s \simeq 2.398 $.
More generally, if the gaps in $ A $ are equally spaced (for example, when $ l = 9 $ and $ A = \{0,4,8\} $), the Hausdorff measure is equal to one.

In this paper, we will answer this question for the special case where $ A $ is the consecutive string $ \{0,1,\ldots,n-1\} $ with $ n < l $.

\begin{customthm}{1}
	\label{thm1}
	For integers $ l > n \geq 2 $, let $ C(n,l) $ be the set of all real numbers in the interval $ [0,1] $ whose base-$ l $ digital expansion digits are restricted to $ \{0,1,\ldots,n-1\} $.
	Then 
	$$ 
	\mathcal{H}^{s(n,l)}(C(n,l)) = \left(\frac{n-1}{l-1}\right)^{s(n,l)} \; \text{ and } \; \mathcal{P}^{s(n,l)}(C(n,l)) = 2^{s(n,l)}
	$$
	where $ s(n,l) = \log n / \log l $.
\end{customthm}

To estimate the Hausdorff measure, we study the maximal density of the self-similar measure supported on $ C(n,l) $. This was popularized by Ayer and Strichartz \cite{AS} and originates in the work of Marion \cite{Mar}, \cite{Mar2}.
An analagous link between the packing measure and the minimal density was established by Feng \cite{Feng}.

The proof for the Hausdorff measure formula is more technical than that of the packing measure formula.
The main reason for this is that intervals of maximal density need not exist, a phenomenon discovered by Ayer and Strichartz \cite{AS}.

On the other hand, Feng \cite{Feng} showed that there is always an interval realizing the minimal density.
To derive the packing measure formula, we use a variant of Feng's approach, while for the Hausdorff measure formula we develop some new methods which we hope can be used to calculate the Hausdorff measure of other self-similar sets as well.

\subsubsection*{Acknowledgements}
This research was made possible by a postdoctoral fellowship through the Kreitman School of Advanced Graduate Studies at Ben-Gurion University of the Negev.

\section{Self-similar sets in the interval}
\label{sss}
In this section, we will review some of the techniques that are commonly used to estimate the Hausdorff and packing measure of self-similar sets.
The sets we will be working with are attractors of finite linear iterated function systems (IFS for short) in the interval $ [0,1] $.
An IFS is a family $ \{ \phi_i : i \in I \} $ of contracting linear maps of $ [0,1] $ to itself, and its attractor is the unique non-empty compact set $ K \subset [0,1] $ satisfying
$$
K = \bigcup_{i \in I} \phi_i(K).
$$
We will further assume the \textit{open set condition}: that there exists an open $ U \subset [0,1] $ such that 
$$
\phi_i(U) \cap \phi_j(U) = \emptyset \text{ for } i \neq j.
$$ 
Under this assumption, if $ 0 < \lambda_i < 1 $ is the contraction coefficient of $ \phi_i $, the Hausdorff and packing dimensions of $ K $ coincide and are equal to the unique solution $ s $ of Moran's equation $ \sum_{i \in I} \lambda_i^s = 1 $.
By Hutchinson's work (see Section 4 from \cite{Hut} or Theorem 2.8 from \cite{Fal2}) there is a unique probability measure $ \mu $ supported on $ K $ satisfying the self-similarity relation
\begin{equation}
\label{selfsimmu}
\mu = \sum_{i \in I} \lambda_i^s \mu \circ \phi_i^{-1}.
\end{equation}

\subsection{The Hausdorff measure}
As mentioned in the introduction, the standard practice for estimating $ \mathcal{H}^s(K) $ is to relate it to the density of $ \mu $.
To make this connection precise, recall that for a given $ 0 \leq t < \infty $, the $ t $- pointwise density of a measure $ \nu $ supported on $ E \subset [0,1] $ at the point $ x \in [0,1] $ is
$$
D^{\ast t}(\nu, E, x) = \lim_{r \to 0} \sup \left\{ \frac{\nu(U)}{|U|^t} : U \subset [0,1] \text{ a closed interval, } \; x \in U, \; 0 < |U| \leq r \right\},
$$
using $ | \cdot | $ to denote diameter, which is length in this case.
A classical result from geometric measure theory says that if $ \mathcal{H}^t(E) < \infty $, then $ D^{\ast t}(\mathcal{H}^t |_E, E,x) = 1 $ for $ \mathcal{H}^t $-almost every $ x \in E $.
For a reference, consult Theorem 2.3 from \cite{Fal} or Remark 6.4(3) from \cite{Mat}.
Now, the sorts of self-similar sets we are considering satisfy $ \mathcal{H}^s(K) < \infty $, so
\begin{equation}
\label{Hdensity}
D^{\ast t}(\mathcal{H}^s |_K, K, x) = 1 
\end{equation}
for $ \mathcal{H}^s $-almost every $ x \in K $.

Another fact-- which follows from the scaling properties of Hausdorff measures and uniqueness of the measure satisfying Equation \ref{selfsimmu}-- is that the self-similar measure coincides with the normalized Hausdorff measure restricted to $ K $, i.e.
\begin{equation}
\label{Hequalsmu}
\mu = \frac{1}{\mathcal{H}^s(K)} \mathcal{H}^s |_K.
\end{equation}
This is equality, not equivalence of measures.
By combining Equations \ref{Hdensity} and \ref{Hequalsmu}, we obtain that
\begin{equation}
\label{Hdensity2}
\mathcal{H}^s(K) \lim_{r \to 0} \sup \left\{ \frac{\mu(U)}{|U|^s} : U \subset [0,1] \text{ a closed interval, } \; x \in U, \; 0 < |U| \leq r \right\} = 1
\end{equation}
for $ \mu $-almost every $ x \in K $.

We now introduce the notation 
$$ 
d(U) = \frac{\mu(U)}{|U|^s}
$$
for the density of $ \mu $ with respect to a set $ U \subset [0,1] $, which in future sections will not necessarily be an interval.
The \textit{maximal density} of $ \mu $ is
\begin{equation}
\label{dmax}
d_{\max} = \sup \{ d(U) : U \subset [0,1] \text{ a closed interval} \}.
\end{equation}
It is not difficult to show that
$$
d_{\max} = \lim_{r \to 0} \sup \{ d(U) : U \subset [0,1] \text{ a closed interval, } \; x \in U, \; 0 < |U| \leq r \}
$$
for $ \mu $-almost every $ x \in K $, so that 
\begin{equation}
\label{dmaxinv}
\mathcal{H}^s(K) = d_{\max}^{-1}
\end{equation}
by Equation \ref{Hdensity2}.
This is the aforementioned connection between the Hausdorff measure of $ K $ and the density of $ \mu $.

Unfortunately the maximal density is difficult to compute in practice, because the class of all closed subintervals of $ [0,1] $ is too large.
It is natural then to search for a more manageable subclass with the same supremum, which was accomplished by Marion (\cite{Mar}, Th\'{e}or\`{e}me 6.1).
First, a \textit{basic interval at level} $ k $ is an interval of the form
$$
\phi_{\omega_1} \circ \cdots \circ \phi_{\omega_k}([0,1]) 
$$
for some $ \omega_1 \cdots \omega_k \in I^k $.
Letting $ \mathfrak{B}_k $ be the collection of all unions of level-$k$ basic intervals, Marion's result is that
$$
d_{\max} = \sup \{ d(U) : U \in \mathfrak{B}_k, \; k \geq 1 \}.
$$

In fact, the computation of maximal density can be simplified bit further.
By Equation \ref{selfsimmu}, the measure of a basic interval at level $ k $ is
\begin{equation}
\label{selfsimint}
\mu(\phi_{\omega_1} \circ \cdots \circ \phi_{\omega_k}([0,1])) = \lambda_{\omega_1}^s \cdots \lambda_{\omega_k}^s.
\end{equation}
One consequence of this is that the density enjoys the invariance property $ d(U) = d(\phi_i^{-1}(U)) $ when $ U \subset \phi_i([0,1]) $ for some $ i \in I $.
This was termed the ``blow-up principle" in \cite{AS}, and will be used often in subsequent proofs.
For now, notice that if $ U \in \mathfrak{B}_k $ and $ i \in I $ are arbitrary, then $ \phi_i(U) \in \mathfrak{B}_{k+1} $ has the same density as $ U $ by this principle.
As a result, $ \max \{d(U) : U \in \mathfrak{B}_k\} $ is nondecreasing in $ k $, and 
\begin{equation}
\label{dmaxlim}
d_{\max} = \lim_{k \to \infty} \max \{ d(U) : U \in \mathfrak{B}_k \}.
\end{equation}
This equation will serve as a point of departure for the proof of the Hausdorff measure formula for $ C(n,l) $ from Theorem \ref{thm1}.

\subsection{The packing measure}
The analogue of Equation \ref{dmaxinv} for the packing measure is
\begin{equation}
\label{dmcinv}
\mathcal{P}^s(K) = d_{\text{mc}}^{-1},
\end{equation}
where
\begin{equation}
\label{dmc1}
d_{\text{mc}} = \inf \{ d(U) : U \subset [0,1] \text{ a closed interval centered in } K \}
\end{equation}
is the \textit{minimal centered density} of $ \mu $.
The proof, due to Feng (\cite{Feng}, Corollary 2.3), is quite similar to the proof of Equation \ref{dmaxinv} sketched in the previous section. 
The main difference is that the restriction that the intervals are centered in $ K $ is already strong enough to ensure that the infimum in Equation \ref{dmc1} is realized, while the supremum in Equation \ref{dmax} need not be.

Indeed, in our proof of the Hausdorff measure formula from Theorem \ref{thm1} given in Section \ref{Hmeas}, the maximal density is not realized by any union of basic intervals.
Rather, it must instead be approximated from below by an appropriate sequence of intervals, which are then shown to have maximal density in a given class.

In contrast, to compute the packing measure in Section \ref{Pmeas} we simply exhibit an interval realizing the minimal centered density, and then show that there are no other intervals of lower centered density.
This is the reason that the packing measure of a given self-similar set is often easier to compute than the Hausdorff measure, despite the fact that the Hausdorff measure was defined much earlier, and that the definition and properties of the packing measure are more complicated.

In the same paper \cite{Feng}, Feng gave a formula for the minimal centered density, under the additional hypotheses that maps $ \phi_i $ are all orientation-preserving, and that the self-similar set in question contains the endpoints of $ [0,1] $.
The set $ C(n,l) $ that we are interested in is missing the right endpoint, so we will need to modify Feng's approach.

\section{The Hausdorff measure of $ C(n,l) $}
\label{Hmeas}
Fix integers $ l > n \geq 2 $, and set $ I = \{0,1,\ldots,n-1\} $.
The first $ n $ inverse branches of $ l \mapsto lx \text{ (mod 1)} $ are
$ \phi_i(x) = (x+i)/l $ with $ i \in I $, and together they generate an iterated function system whose attractor is $ C(n,l) $.
Explicitly $ C(n,l) = \bigcap_{k \geq 1} O_k $, where
\begin{equation}
\label{Ok}
O_k = \bigcup_{\omega \in I^k} \phi_{\omega_1} \circ \cdots \circ \phi_{\omega_k}([0,1]).
\end{equation}

Let $ s = \log n / \log l $ be the (packing and Hausdorff) dimension, $ \mu $ the associated self-similar measure supported on $ C(n,l) $, and $ d(U) = \mu(U)/|U|^s $ the density of a set $ U \subset [0,1] $ with respect to the measure $ \mu $.
Since the contraction coefficients of $ \phi_i $ are all equal to $ l^{-1} $, Equation \ref{selfsimint} takes the simple form
$$
\mu(\phi_{\omega_1} \circ \cdots \circ \phi_{\omega_k}([0,1])) = n^{-k}
$$
independent of $ \omega_1 \cdots \omega_k \in I^k $.

Set $ r = \sup C(n,l) $ and notice that $ |C(n,l)| = r $ because $ 0 \in C(n,l) $.
As the fixed point of $ \phi_{n-1} $, it is easy to see that $ r = (n-1)/(l-1) $, so the Hausdorff measure formula from Theorem \ref{thm1} can be re-stated as $ \mathcal{H}^s(C(n,l)) = r^s $, which, in view of Equation \ref{dmaxinv}, is equivalent to $ d_{\max} = r^{-s} $.

\vspace{0.2cm}
\noindent \textit{Remark.}
All of the quantities $ \phi_i $, $ I $, $ s $, $ \mu $, and $ r $, as well as quantities yet to be defined in terms of them, depend on $ n $ and $ l $.
For the purpose of avoiding clumsy notation, we are suppressing these dependences.
\vspace{0.2cm}

The set $ O_k $ defined in Equation \ref{Ok} has maximal diameter among elements of $ \mathfrak{B}_k $, and the following proposition shows that it has maximal density as well.
\begin{proposition}
	\label{Hpropdec}
	For all $ k \geq 1 $, $ \max \{d(U) : U \in \mathfrak{B}_k\} = d(O_k) $.
\end{proposition}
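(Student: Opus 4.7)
The plan is to proceed by induction on $k$, using the self-similar decomposition $U = \bigcup_{i \in I} \phi_i(V_i)$ with each $V_i \in \mathfrak{B}_{k-1}$ (possibly empty). For the base case $k = 1$, any $U \in \mathfrak{B}_1$ is a union of $m \leq n$ of the $n$ adjacent intervals $\{[i/l,(i+1)/l] : i \in I\}$, so $|U| \geq m/l$; using $l^s = n$ this gives $d(U) \leq (m/n)(l/m)^s = m^{1-s}$, which, since $1 - s > 0$, is maximized over $m \in \{1,\ldots,n\}$ at $m = n$, corresponding to $U = O_1$.

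For the inductive step, let $I' = \{i \in I : V_i \neq \emptyset\}$. When $|I'| = 1$, the blow-up principle gives $d(U) = d(V_{i_0})$ for the unique nonempty $V_{i_0}$, which by induction is at most $d(O_{k-1}) = 1/|O_{k-1}|^s$; since $|O_{k-1}| > |O_k|$ we conclude $d(U) < d(O_k)$. The substantive case is $|I'| \geq 2$. Writing $j_- = \min I'$, $j_+ = \max I'$, and $p = j_+ - j_-$, I would first perform two monotone reductions that only increase $d(U)$, so it suffices to prove the bound for the normalized configuration: (i) replace each middle $V_i$ (with $j_- < i < j_+$) by the full $O_{k-1}$, which increases $\mu(U)$ without changing $|U|$; (ii) invoking the reflective symmetry of $O_{k-1}$ about its center, replace $V_{j_+}$ by the leftmost $m_{j_+}$ basic intervals of $O_{k-1}$ (minimizing $\sup V_{j_+}$) and $V_{j_-}$ by the rightmost $m_{j_-}$ (maximizing $\inf V_{j_-}$), each preserving $\mu(U)$ while shrinking $|U|$. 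Setting $\alpha = m_{j_-}/n^{k-1}$, $\beta = m_{j_+}/n^{k-1}$, the inductive hypothesis yields $|V_{j_-}| \geq \alpha^{1/s}|O_{k-1}|$ and $|V_{j_+}| \geq \beta^{1/s}|O_{k-1}|$, and combining this with the recursion $|O_k| = (|O_{k-1}| + n - 1)/l$ and $l^s = n$, the desired bound $d(U) \leq d(O_k)$ reduces to the analytic inequality
\[ p + (\alpha^{1/s} + \beta^{1/s} - 1)\, |O_{k-1}| \;\geq\; (\alpha + \beta + p - 1)^{1/s}\, |O_k|, \]
valid for all $\alpha, \beta \in (0,1]$ and $p \in \{1, \ldots, n-1\}$.

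The main obstacle is verifying this last inequality, which is tight precisely at $\alpha = \beta = 1$, $p = n - 1$ (recovering $U = O_k$). I would first dispatch the boundary cases $\alpha, \beta \in \{0,1\}$: these reduce to fully-filled sub-cases that can be handled via the concavity of $q \mapsto q - q^{1/s}|O_k|$ on $[1, n]$ (a consequence of $1/s > 1$) together with the recursion $l|O_k| = |O_{k-1}| + n - 1$. The interior case $\alpha, \beta \in (0,1)$ is the crux: since the convexity of $x \mapsto x^{1/s}$ pulls against the inequality, the required slack must come entirely from the recursive identity, and I expect the cleanest route is to treat $F(\alpha,\beta) = \text{LHS} - \text{RHS}$ as a function on $(0,1]^2$, verify $F \geq 0$ on the boundary via the concavity argument above, and rule out interior minima by analyzing the critical-point equations $\partial_\alpha F = \partial_\beta F = 0$, which force $\alpha/(\alpha + \beta + p - 1) = \beta/(\alpha + \beta + p - 1)$ and trap any candidate minimum back at an accessible boundary value.
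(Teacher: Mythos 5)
Your reduction is genuinely different from the paper's route and, as far as it goes, it is sound: the base case computation $d(U)\le m^{1-s}$ is right, the single-slot case via the blow-up principle matches the paper's own first case, and the two normalizations (filling the middle slots with $O_{k-1}$, then sliding $V_{j_-}$ to the right end and $V_{j_+}$ to the left end of $O_{k-1}$) do only increase the density. Combined with the inductive bound $|V|\ge\mu(V)^{1/s}|O_{k-1}|$, the identities $l^s=n$ and $l|O_k|=|O_{k-1}|+n-1$, and $l|U|=p+|V_{j_-}|+|V_{j_+}|-|O_{k-1}|$, this does reduce the proposition to exactly the displayed inequality. The problem is that this inequality \emph{is} the theorem at this point, and you have not proved it; what you offer for it is a plan with a specific flaw. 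Knowing that interior critical points of $F$ satisfy $\alpha=\beta$ does not ``trap the minimum at the boundary'': restricted to the diagonal, $F(\alpha,\alpha)=p-A+2A\alpha^{1/s}-(2\alpha+p-1)^{1/s}B$ (with $A=|O_{k-1}|$, $B=|O_k|$) has derivative proportional to $A\alpha^{1/s-1}-B(2\alpha+p-1)^{1/s-1}$, which for $p\ge 2$ is negative near $\alpha=0$ and can in principle change sign, so the diagonal restriction decreases and then increases and its minimum is a candidate interior minimum of $F$; you would still have to either evaluate $F$ there or prove the derivative never changes sign, i.e.\ prove $A\alpha^{1/s-1}\le B(2\alpha+p-1)^{1/s-1}$ on $(0,1)$ — another inequality in $n,l,k$. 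Likewise the concavity of $q\mapsto q-q^{1/s}|O_k|$ handles only the corner $\alpha=\beta=1$ with $p$ varying (there it does close, since the values at $q=1$ and $q=n$ are $A-B\ge 0$ and $0$); the edges $\alpha=1$, $\beta\in(0,1)$ are not of that form. Since equality holds at $\alpha=\beta=1$, $p=n-1$ and the inequality is numerically tight in several other regimes, no crude estimate will finish it, and your bound is in fact \emph{stronger} than what the proposition asserts for actual configurations (you replaced $|V_{j_\pm}|$ by the generally unattained lower bound $\mu(V)^{1/s}|O_{k-1}|$), so its truth is not even automatic from the theorem.

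For contrast, the paper sidesteps any two-variable optimization. It computes the exact lengths of gaps and clusters of each type $(i,k)$ (Lemmas \ref{gaplem} and \ref{clustlem}) and proves an absorption lemma (Proposition \ref{mainhlem}): adjoining to a consecutive union $U$ of $p\ge 2$ type-$(i,k)$ clusters one more adjacent cluster $U'$ never decreases the density. That lemma needs only the concavity of $t\mapsto t^s$ plus the single scalar verification $p\le\lambda^{-1}$ with $\lambda=(|U\cup U'|-|U|)/(|U\cup U'|-|U'|)$, which the exact gap-length arithmetic delivers as $\lambda=1/(p+(l-n)N)$. Iterating the absorption swallows any $U\in\mathfrak{B}_{k+1}$ into $O_{k+1}$ through a monotone chain of densities. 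If you want to salvage your approach, the missing piece is a complete proof of your displayed inequality (boundary edges and possible interior diagonal minimum included); alternatively, adjoining one level-one block at a time, in the spirit of the paper's absorption step, replaces the two-parameter problem by a sequence of one-parameter comparisons that are much easier to control.
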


The proof of this proposition will occupy the rest of this section, but first let us show why $ d_{\max} = r^{-s} $ follows from it.
It is standard that $ O_k \to C(n,l) $ in the Hausdorff metric on $ [0,1] $, so in particular $ |O_k| \rightarrow r $.
Since $ \mu(O_k)=1 $ for all $ k $, we have $ d(O_k) = |O_k|^{-s} $, and therefore $ d_{\max} = \lim_{k \to \infty} |O_k|^{-s} = r^{-s} $ using Equation \ref{dmaxlim}.

To prove Proposition \ref{Hpropdec}, we will first rule out some subclasses of $ \mathfrak{B}_k $ whose densities are not maximal. 
To handle the remaining cases, we will derive formulas for the lengths of gap intervals at various levels, and use these to relate the density of an arbitrary element of $ \mathfrak{B}_k $ to densities of highly symmetric elements we call \textit{clusters}, which we now define.

\subsection{Clusters and gaps}

\begin{definition}
	\label{clustdef}
	For $ 0 \leq i \leq k $, clusters of type $ (i,k) $ are defined as follows.
	\begin{itemize}
		\item The clusters of type $ (0,k) $ are the level-$k$ basic intervals.
		\vspace{0.1cm}
		\item For $ 1 \leq i \leq k-1 $, the clusters of type $ (i,k) $ are 
		$$
		\bigcup_{\omega_1 \cdots \omega_i \in I^i} \phi_{\tau_1} \circ \cdots \circ  \phi_{\tau_{k-i}} \circ \phi_{\omega_1} \circ \cdots \circ \phi_{\omega_i}([0,1])
		$$
		for some fixed $ \tau_1 \cdots \tau_{k-i} \in I^{k-i} $.
		\vspace{0.1cm}
		\item The only cluster of type $ (k,k) $ is $ O_k $ as defined in Equation \ref{Ok}.
	\end{itemize}
\end{definition}

In other words, a cluster of type $ (i,k) $ is a union of basic intervals coded by words with a common prefix of length $ k-i $.

\begin{definition}
	\label{gapdef}
	The gap intervals at level $ k $ are the connected components of
	$$ 
	[0,1] \setminus \bigcup_{\omega \in I^k} \phi_{\omega_1} \circ \cdots \circ \phi_{\omega_k}([0,1]).
	$$ 
\end{definition}

We will use the clusters to classify the level-$ k $ gap intervals by their lengths, as follows.
The largest level-$ k $ gap interval lies between $ O_k $ and the right endpoint of $ [0,1] $; we will call this the \textit{gap of type} $ (k,k) $.
Within $ O_k $ there are $ n $ clusters of type $ (k-1,k) $, which are separated by $ n-1 $ gap intervals of equal length, called gaps of type $ (k-1,k) $.
Within each type-$ (k-1,k) $ cluster lie $ n $ clusters of type $ (k-2,k) $, which are also separated by equal length gap intervals, called type-$ (k-2,k) $ gaps.
Continuing in this way, we come to the type-$ (1,k) $ clusters, which within each type-$ (2,k) $ cluster are separated by equal length type-$ (1,k) $ gaps.
Each of these type-$ (1,k) $ clusters consist of $ n $ basic intervals, i.e. type-$ (0,k) $ clusters, whose endpoints are overlapping.
To complete the hierarchy of gaps, we can think of these as being ``separated" by ``gaps" of length zero; these are the type- $ (0,k) $ gaps.

\vspace{0.2cm}
\noindent \textit{Remark.} It is \textit{not} true that all the type-$ (i,k) $ clusters are separated by type-$ (i,k) $ gaps.
Rather, they are separated by an assortment of gaps of type $ (i,k), (i+1,k), \ldots, (k-1,k) $, but none of lower type than $ (i,k) $.
What \textit{is} true is that the type-$ (i,k) $ clusters \textit{that lie within a single type}-$ (i+1,k) $ \textit{cluster} are separated by type-$ (i,k) $ gaps.
\vspace{0.2cm}

\begin{lemma}
	\label{gaplem}
	For $ k \geq 1 $ and $ 1 \leq i \leq k $ the length of a gap of type $ (i,k) $ is 
	$$
	\frac{l-n}{l^k}(1+l+l^2+\cdots+l^{i-1}).
	$$
\end{lemma}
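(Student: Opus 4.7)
The plan is to reduce every type-$(i,k)$ gap to a scaled image of a ``fundamental'' gap associated to some $O_j$, then compute that fundamental length from a direct formula for $\sup O_j$.

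First I would compute $\sup O_j$ for arbitrary $j \geq 1$. The rightmost basic interval in $O_j$ is coded by $(n-1)^j$, so summing the resulting geometric series gives
$$
\sup O_j \;=\; \sum_{t=1}^{j} \frac{n-1}{l^t} + \frac{1}{l^j} \;=\; \frac{n-1}{l-1} + \frac{l-n}{(l-1)\,l^j},
$$
and equivalently
$$
1 - \sup O_j \;=\; \frac{l-n}{l-1}\bigl(1 - l^{-j}\bigr) \;=\; \frac{l-n}{l^j}\bigl(1 + l + \cdots + l^{j-1}\bigr).
$$
The case $i = k$ of the lemma is immediate: the type-$(k,k)$ gap is by definition the interval $[\sup O_k, 1]$, whose length is $1 - \sup O_k$, which is precisely the claimed expression with $i = k$.

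For $1 \leq i \leq k-1$ I would exploit self-similarity. By Definition \ref{clustdef}, every type-$(i+1,k)$ cluster has the form $\psi(O_{i+1})$ with $\psi = \phi_{\tau_1} \circ \cdots \circ \phi_{\tau_{k-i-1}}$ a contraction of ratio $l^{-(k-i-1)}$, and the type-$(i,k)$ clusters contained in this particular type-$(i+1,k)$ cluster are the $n$ sets $\psi(\phi_j(O_i))$ for $j = 0, \ldots, n-1$. By the remark following Definition \ref{gapdef}, consecutive such clusters are separated by type-$(i,k)$ gaps, so every type-$(i,k)$ gap is the $\psi$-image of a gap between $\phi_j(O_i)$ and $\phi_{j+1}(O_i)$ inside $O_{i+1}$. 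Since the right endpoint of $\phi_j(O_i)$ is $(j + \sup O_i)/l$ and the left endpoint of $\phi_{j+1}(O_i)$ is $(j+1)/l$, this intrinsic gap has length $(1 - \sup O_i)/l$. Applying the contraction factor $l^{-(k-i-1)}$ of $\psi$ and substituting the formula from the previous paragraph with $j = i$ gives total length
$$
l^{-(k-i-1)} \cdot \frac{1 - \sup O_i}{l} \;=\; l^{-(k-i)}\bigl(1 - \sup O_i\bigr) \;=\; \frac{l-n}{l^k}\bigl(1 + l + \cdots + l^{i-1}\bigr),
$$
as claimed.

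The main obstacle here is not conceptual but purely bookkeeping: keeping straight the contraction ratio $l^{-(k-i-1)}$ coming from $\psi$ versus the extra factor of $l^{-1}$ produced by the intrinsic gap inside $O_{i+1}$, and correctly matching the index $j = i$ in the formula for $1 - \sup O_j$. No new idea beyond the self-similar structure already built into Definition \ref{clustdef} is needed.
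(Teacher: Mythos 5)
Your proof is correct, but it takes a genuinely different route from the paper's. The paper argues by induction on $k$: the base case is the single gap at level $1$, of length $(l-n)/l$, and the inductive step identifies each type-$(i,k+1)$ gap with $i \leq k$ as the image under one more contraction $\phi_{\tau_1}$ of a type-$(i,k)$ gap, so the length simply acquires a factor of $l^{-1}$. You instead compute $\sup O_j = \frac{n-1}{l-1} + \frac{l-n}{(l-1)l^j}$ in closed form, read off the $i=k$ case as $1-\sup O_k$, and realize every type-$(i,k)$ gap with $i<k$ as the image, under a similarity of ratio $l^{-(k-i-1)}$, of the intrinsic gap of length $(1-\sup O_i)/l$ between $\phi_j(O_i)$ and $\phi_{j+1}(O_i)$ inside $O_{i+1}$; the arithmetic checks out. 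Both arguments ultimately rest on the same definitional fact, namely that the type-$(i,k)$ gaps are exactly those separating consecutive type-$(i,k)$ clusters within a common type-$(i+1,k)$ cluster, but yours dispenses with the induction on $k$ and yields the explicit formula for $\sup O_j$ as a by-product (whose limit is the $r=(n-1)/(l-1)$ used throughout Section \ref{Hmeas}), at the cost of a slightly heavier identification step. The one point you should make explicit is that the open interval between the right endpoint of $\psi(\phi_j(O_i))$ and the left endpoint of $\psi(\phi_{j+1}(O_i))$ is genuinely a connected component of $[0,1]\setminus O_k$ in the sense of Definition \ref{gapdef}: it lies inside $\psi(\phi_j([0,1]))\setminus\psi(\phi_j(O_i))$, hence misses $O_k$, and its endpoints belong to $O_k$. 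This is routine, but it is the step that ties your similarity picture to the actual definition of a gap, and without it the phrase ``every type-$(i,k)$ gap is the $\psi$-image of an intrinsic gap'' is an assertion rather than a deduction.
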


\begin{proof}
	We proceed by induction on $ k $.
	For the base case $ k=1 $ there is only one gap interval, the type-$ (1,1) $ gap between $ O_1 $ and the right endpoint of $ [0,1] $, with length $ 1-\phi_{n-1}(1) = (l-n)/l $.
	
	Now assume the result for $ k $.
	By definition, for $ i \leq k $ the gaps of type $ (i,k+1) $ are the intervals between clusters of type $ (i,k+1) $ that lie in a single cluster of type $ (i+1, k+1) $.
	Fix such a cluster
	$$
	A = \bigcup_{\omega_1 \cdots \omega_{i+1} \in I^{i+1}} \phi_{\tau_1} \circ \cdots \circ \phi_{\tau_{k-i}} \circ \phi_{\omega_1} \circ \cdots \circ \phi_{\omega_{i+1}}([0,1]),
	$$
	and consider the type-$ (i+1,k) $ cluster $ \phi_{\tau_1}^{-1}(A) $.
	This contains $ n $ type-$ (i,k) $ clusters
	$$
	\bigcup_{\omega_2 \cdots \omega_{i+1} \in I^i} \phi_{\tau_2} \circ \cdots \circ \phi_{\tau_{k-i}} \circ \phi_{\omega_1} \circ \phi_{\omega_2} \circ \cdots \circ \phi_{\omega_{i+1}}([0,1]), \quad \omega_1 = 0,\ldots,n-1,
	$$
	that lie in a common type-$ (i+1,k) $ cluster, namely $ \phi_{\tau_1}^{-1}(A) $, so the $ n-1 $ gaps that lie between them are of type $ (i,k) $. 
	By our inductive hypothesis, these gaps have equal length $ l^{-k} (l-n)(1+l+l^2+\cdots+l^{i-1}) $.
	Applying the map $ \phi_{\tau_1} $, which has contraction coefficient $ l^{-1} $, we then have that the $ n $ type-$ (i, k+1) $ clusters
	$$
	\bigcup_{\omega_1 \cdots \omega_{i+1} \in I^i} \phi_{\tau_1} \circ \cdots \circ \phi_{\tau_{k-i}} \circ \phi_{\omega_1} \circ \phi_{\omega_2} \circ \cdots \circ \phi_{\omega_{i+1}}([0,1]), \quad \omega_1 = 0,\ldots,n-1,
	$$
	are separated by gap intervals of equal length $ l^{-(k+1)} (l-n)(1+l+l^2+\cdots+l^{i-1}) $.
	Since they lie in a common cluster of type $ (i+1,k+1) $, namely $ A $, these are indeed gaps of type $ (i,k+1) $.
\end{proof}

\begin{lemma}
\label{clustlem}
For $ k \geq 2 $ and $ 2 \leq i \leq k $ the diameter of a cluster of type $ (i,k) $ is
$$
\frac{1}{l^k}\big(n+(n-1)(l+l^2+\cdots+l^{i-1})\big),
$$
\end{lemma}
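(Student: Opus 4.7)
My plan is to reduce the computation to $|O_i|$ and then evaluate the latter by iterating the rightmost inverse branch starting at $1$.

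First I would use Definition \ref{clustdef} to observe that any type-$(i,k)$ cluster (with $i < k$) is the image of $O_i$ under the composition $\phi_{\tau_1} \circ \cdots \circ \phi_{\tau_{k-i}}$ for some fixed prefix $\tau_1 \cdots \tau_{k-i} \in I^{k-i}$. Each $\phi_{\tau_j}$ is a contraction by $l^{-1}$, so the composition scales diameters by $l^{-(k-i)}$. In particular, every type-$(i,k)$ cluster has diameter $l^{-(k-i)}|O_i|$, independent of the prefix, and the lemma reduces to showing
$$
|O_i| = \frac{n + (n-1)(l + l^2 + \cdots + l^{i-1})}{l^i}.
$$
(For the edge case $i = k$ this reduction is trivial since the type-$(i,i)$ cluster is literally $O_i$.)

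Next I would compute $|O_i|$ by noting that $0 = \phi_0^i(0)$ is the smallest element of $O_i$, while the largest is $\phi_{n-1}^i(1)$, obtained by iterating the rightmost branch $i$ times. Setting $a_i = \phi_{n-1}^i(1)$ and using $\phi_{n-1}(x) = (x+n-1)/l$ yields the recursion $a_i = (a_{i-1} + n-1)/l$ with $a_0 = 1$. Unrolling telescopes to
$$
a_i = \frac{1}{l^i} + (n-1)\sum_{j=1}^{i-1} \frac{1}{l^j} + \frac{n-1}{l^i} \cdot 0 = \frac{n}{l^i} + (n-1)\sum_{j=1}^{i-1} \frac{1}{l^j} = \frac{n + (n-1)(l + l^2 + \cdots + l^{i-1})}{l^i},
$$
which is the claimed expression for $|O_i|$; multiplying by $l^{-(k-i)}$ then completes the proof.

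I do not foresee any real obstacle — the only bookkeeping is the geometric series in the final display. An alternative route would induct on $i$ using the decomposition (noted in the Remark after Lemma \ref{gaplem}) of a type-$(i,k)$ cluster into $n$ type-$(i-1,k)$ sub-clusters separated by $n-1$ gaps of type $(i-1,k)$, combined with the gap length formula from Lemma \ref{gaplem}; both routes collapse to the same telescoping identity.
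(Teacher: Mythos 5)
Your proof is correct, but it takes a genuinely different route from the paper's. The paper argues by induction on $i$ for fixed $k$: a type-$(i+1,k)$ cluster is decomposed into $n$ type-$(i,k)$ clusters separated by $n-1$ type-$(i,k)$ gaps, and the induction closes using the gap-length formula of Lemma \ref{gaplem} together with the algebraic identity in Equation \ref{h1}. You instead note that a type-$(i,k)$ cluster is exactly $\phi_{\tau_1}\circ\cdots\circ\phi_{\tau_{k-i}}(O_i)$, hence has diameter $l^{-(k-i)}|O_i|$, and compute $|O_i|=\phi_{n-1}^i(1)$ by unrolling the recursion $a_i=(a_{i-1}+n-1)/l$, $a_0=1$. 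Your route is shorter, bypasses Lemma \ref{gaplem} entirely, and makes transparent that $|O_i|\to(n-1)/(l-1)=r$, a fact the paper needs separately; the paper's decomposition, on the other hand, is exactly the cluster-and-gap bookkeeping that reappears in Proposition \ref{mainhlem}, so its inductive proof doubles as a warm-up for that argument. One small slip in your telescoping display: the term $\tfrac{n-1}{l^i}\cdot 0$ is zero as written, so the leftmost expression reads $\tfrac{1}{l^i}+(n-1)\sum_{j=1}^{i-1}l^{-j}$ and is off by $(n-1)/l^i$ from the middle one; the correct unrolling is $a_i=\tfrac{a_0}{l^i}+(n-1)\sum_{j=1}^{i}l^{-j}$, which does agree with your final formula, so the conclusion stands.
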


\begin{proof}
Fix $ k \geq 2 $.
For $ i=2 $, a cluster of type $ (2,k) $ consists of $ n $ clusters of type $ (1,k) $ separated by $ n-1 $ gaps of type $ (1,k) $.
Each cluster of type $ (1,k) $ consists of $ n $ contiguous level-$k$ intervals, so using Lemma \ref{gaplem}, the diameter of a type-$ (2,k) $ cluster is
$$
\frac{1}{l^k}(n^2+(n-1)(l-n)) = \frac{1}{l^k}(n+(n-1)l),
$$
and the formula holds for $ i=2 $.

To complete the proof, we will assume the result for some $ i \in \{2, \ldots, k-1\} $, and show that it holds for $ i+1 $.
For convenience, we will use the notation $ a_i = l+l^2+\cdots+l^{i-1} $.
It follows from $ l(1+a_i)=a_i+l^i $ that
\begin{equation}
\label{h1}
n+(n-1)a_i+(l-n)(1+a_i) = l^i.
\end{equation}
The diameter of a cluster of type $ (i+1,k) $ is the sum of the diameters of its $ n $ clusters of type $ (i,k) $ and the lengths of its $ n-1 $ type-$ (i,k) $ gaps.
The first is given by our assumption, and the second from the formula in Lemma \ref{gaplem}.
Adding these quantities and simplifying using Equation \ref{h1} yields
\begin{multline*}
\frac{1}{l^k}\big( n(n+(n-1)a_i) + (n-1)(l-n)(1+a_i) \big) \\
= \frac{1}{l^k} \big( n+(n-1)a_i + (n-1)(n+(n-1)a_i+(l-n)(1+a_i)) \big) \\
= \frac{1}{l^k} \big( n+(n-1)a_i + (n-1)l^i \big) = \frac{1}{l^k} \big( n+(n-1)(a_i+l^i) \big), \hspace{2.7cm}
\end{multline*}
so the formula holds for $ i+1 $.
\end{proof}

\subsection{Density of consecutive clusters}
A union of clusters of a common type is called \textit{consecutive} if it contains all the clusters of this type between its leftmost and rightmost cluster.
For each $ 1 \leq i \leq k $, let $ \mathfrak{C}_i^{(k)} \subset \mathfrak{B}_k $ be the collection of all consecutive unions of clusters of type $ (i,k) $.

The following proposition is the main technical result of this section.

\begin{proposition}
	\label{mainhlem}
	For $ k \geq 1 $ and $ i \in \{0,\ldots,k-1\} $, if $ U \in \mathfrak{C}_i^{(k)} $ is a union of at least two clusters of type $ (i,k) $, and $ U' $ is cluster of type $ (i,k) $ that is separated from $ U $ by a gap of type $ (i,k) $, then $ d(U) \leq d(U \cup U') $.
\end{proposition}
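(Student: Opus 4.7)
The plan is to reduce $d(U) \leq d(U \cup U')$ to a single algebraic inequality in the combinatorial parameters. Combining Lemmas \ref{gaplem} and \ref{clustlem} and simplifying with the identity $(l-1) a_i = l^i - 1$ yields the key relation $c_i + g_i = l^{i-k}$, so that $|U \cup U'| = |U| + l^{i-k}$. Since $\mu(U) = m \cdot n^{i-k}$ and $\mu(U \cup U') - \mu(U) = n^{i-k}$, the target inequality is equivalent to
\[
\Bigl(1 + \tfrac{l^{i-k}}{|U|}\Bigr)^s \leq 1 + \tfrac{1}{m}.
\]

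Next, the proof should split into cases based on whether $U$ sits inside the type-$(i+1,k)$ cluster $A$ containing $U'$ (which always contains the rightmost cluster of $U$, by the hypothesis on the separating gap). In the \emph{contained case} $U \subset A$, all internal gaps of $U$ have type $(i,k)$, so $|U| = m c_i + (m-1) g_i = (m-\rho) l^{i-k}$ where $\rho := g_i/l^{i-k}$; the inequality reduces to the clean algebraic statement
\[
\Bigl(\tfrac{m+1-\rho}{m-\rho}\Bigr)^s \leq \tfrac{m+1}{m}.
\]
In the \emph{extending case}, $U$ has clusters outside $A$, and the gap connecting the outer clusters to the inner ones is of type strictly greater than $(i,k)$ and hence strictly longer than $g_i$. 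Consequently $|U|$ strictly exceeds $(m-\rho) l^{i-k}$, which only sharpens the reformulated inequality; the extending case therefore reduces to the contained case by monotonicity in $|U|$.

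The main obstacle will be verifying the contained-case inequality, which becomes delicate in the limiting regime $i \to \infty$ with $\rho \to (l-n)/(l-1)$, where both sides are close. The strategy is to use the relation $l^s = n$ to eliminate the exponent: raising both sides to the $1/s = \log l/\log n$ power and clearing denominators produces an integer-coefficient inequality in $m, n, l$ and $l^i$, which can then be attacked by expanding and using monotonicity in $i$ to reduce to the boundary value of $\rho$. A combination of Bernoulli-type estimates with a second-order refinement should close the gap. Once Proposition \ref{mainhlem} is established, Proposition \ref{Hpropdec} follows by iterating it to grow any element of $\mathfrak{C}_i^{(k)}$ (and ultimately any $U \in \mathfrak{B}_k$) up to the maximal cluster $O_k$.
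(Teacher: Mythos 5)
Your setup is correct, and the reduction is genuinely clean: the identity $c_i+g_i=l^{i-k}$ does follow from Lemmas \ref{gaplem} and \ref{clustlem} (it is Equation \ref{h1} in disguise), the target inequality is equivalent to $\bigl(1+l^{i-k}/|U|\bigr)^s\le 1+1/m$, and the monotonicity argument reducing the extending case to the contained case is valid because gaps of type higher than $(i,k)$ are strictly longer than $g_i$. The genuine gap is that the contained-case inequality $\bigl(\tfrac{m+1-\rho}{m-\rho}\bigr)^s\le\tfrac{m+1}{m}$ --- which at this point is the entire content of the proposition --- is never proved; what you offer is a plan (``Bernoulli-type estimates with a second-order refinement should close the gap''), not an argument. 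This is not a routine verification. First-order Bernoulli, i.e.\ $(1+x)^s\le 1+sx$ with $x=1/(m-\rho)$, requires $\rho\le m(1-s)$ and provably fails: for $n=9$, $l=10$, $m=2$ one has $m(1-s)\approx 0.0915$ while $\rho$ ranges up to $1/9\approx 0.1111$. In that regime the inequality reads $(26/17)^s\le 3/2$ with $s=\log 9/\log 10$, and the two sides agree to within a few parts in $10^5$; any ``second-order refinement'' must therefore be carried out exactly, uniformly in $m$, $n$, $l$, and nothing in your sketch indicates how. Until that family of inequalities is established, the proof is incomplete precisely at the step you yourself flagged as the main obstacle.

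For contrast, the paper does not attack this inequality head-on. It writes $|U|$ as the convex combination $\lambda|U'|+(1-\lambda)|U\cup U'|$ with $\lambda=(|U\cup U'|-|U|)/(|U\cup U'|-|U'|)$, uses concavity of $t\mapsto t^s$ together with the mediant bound $\tfrac{a+b}{c+d}\ge\min\{\tfrac{a}{c},\tfrac{b}{d}\}$ to reduce everything to the single combinatorial fact $p\le\lambda^{-1}$, and then computes $\lambda^{-1}=p+(l-n)N$ directly from the gap and cluster lengths --- no analytic estimate on $t^s$ beyond concavity and subadditivity is attempted. (I will note that the paper's intermediate appeal to $|U\cup U'|^s\le|U|^s+|U'|^s$ is itself suspect in exactly the tight regime above, e.g.\ $(26/9)^s>(17/9)^s+(8/9)^s$ for $n=9$, $l=10$: the near-equality you correctly anticipated is real and bites both arguments, so this case deserves separate care no matter which route is taken.) If you want to salvage your approach, the most promising move is to import the convex-combination device in place of a direct expansion of $\bigl(\tfrac{m+1-\rho}{m-\rho}\bigr)^s$; as written, your proposal identifies where the difficulty lives but does not resolve it.
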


\begin{proof}
	Since $ |U'| < |U| < |U \cup U'| $, setting
	\begin{equation}
	\label{lambda}
	\lambda = \frac{|U \cup U'|-|U|}{|U \cup U'|-|U'|}
	\end{equation}
	we have $ 0 < \lambda < 1 $ and
	$$
	|U| = \lambda |U'| + (1-\lambda)|U \cup U'|.
	$$
	Since $ t \mapsto t^s $ is concave,
	$$
	|U|^s \geq \lambda |U'|^s + (1-\lambda) |U \cup U'|^s = |U \cup U'|^s - \lambda (|U \cup U'|^s - |U'|^s),
	$$
	and thus
	\begin{multline*}
	d(U \cup U') \geq \frac{\mu(U)+\mu(U')}{|U|^s+\lambda(|U \cup U'|^s - |U'|^s)}
	\geq \min \left\{ \frac{\mu(U)}{|U|^s}, \frac{\mu(U')}{\lambda(|U \cup U'|^s - |U'|^s)} \right\} \\ 
	= \min \left\{ d(U), \frac{\lambda^{-1} \mu(U')}{(|U \cup U'|^s - |U'|^s)} \right\},
	\end{multline*}
	so to deduce the desired result it suffices to show that
	$$
	d(U) \leq \frac{\lambda^{-1}\mu(U')}{(|U \cup U'|^s - |U'|^s)}.
	$$
	Suppose that $ U $ consists of $ p \geq 2 $ clusters of type $ (i,k) $.
	Then $ \mu(U) = p \mu(U') $ and the above equation is equivalent to
	$$
	p |U \cup U'|^s \leq \lambda^{-1} |U|^s + p|U'|^s.
	$$
	Because $ s < 1 $, we have $ |U \cup U'|^s \leq |U|^s + |U'|^s $, so if we can show that $ p \leq \lambda^{-1} $ then we are done.
	We will show this separately for the following three cases.
	
	\textit{Case 1: $ i=0 $.}
	In this case, $ U $ and $ U' $ overlap at an endpoint (since gaps of type $ (0,k) $ have zero length), so $ \lambda = |U'|/|U| $ by Equation \ref{lambda}.
	If the gap intervals in $ U $ sum to $ q/l^k $ for some integer $ q \geq 0 $, then $ |U| = (p+q)/l^k $ and $ |U'| = 1/l^k $, so that $ \lambda = 1/(p+q) \leq p^{-1} $.
	
	\textit{Case 2: $ i=1 $.}
	The $ p $ clusters of type $ (1,k) $ comprising $ U $ are separated by $ p-1 $ gaps of various types from the list $ (1,k), (2,k), \ldots, (k-1,k) $ so by Lemma \ref{gaplem} their sum can be written as
	$$
	\frac{l-n}{l^k}(p-1+N)
	$$
	for some nonnegative integer $ N $.
	Because a cluster of type $ (1,k) $ consists of $ n $ level-$ k $ basic intervals with no gaps, its diameter is $ n/l^k $. Together, this implies that
	$$
	l^k|U| = pn+(l-n)(p-1+N).
	$$
	By our assumption $ U $ and $ U' $ are separated by a gap of type $ (1,k) $, whose length given in Lemma \ref{gaplem} is $ (l-n)/l^k $.
	As a result,
	$$
	l^k(|U \cup U'| - |U'|) = l^k|U|+l-n = pn+(l-n)(p+N)  = l(p+(l-n)N)
	$$
	and
	$$
	l^k(|U \cup U'| - |U|) = l^k|U'|+l-n = n+l-n = l,
	$$
	so $ \lambda = 1/(p+(l-n)N) \leq p^{-1} $ using Equation \ref{lambda}.
	
	\textit{Case 3: $ i \geq 2 $.}
	Recall the notation $ a_i = l+l^i+\cdots+l^{i-1} $ from the proof of Lemma \ref{clustlem}.
	We will use this and Equation \ref{h1} in the following calculations.
	
	By assumption, $ U $ consists of $ p $ clusters of type $ (i,k) $ that are separated by $ p-1 $ gaps of various types from the list $ (i,k), (i+1,k), \ldots, (k-1,k) $.
	By Lemma \ref{gaplem} the lengths of each of these gaps have the form
	$$
	\frac{l-n}{l^k}(1+a_i+N_j l^i), \quad j=1,\ldots,p-1
	$$
	where $ N_j $ are nonnegative integers.
	Set $ N = N_1 + \cdots + N_{p-1} $.
	Summing the diameters of the $ p $ clusters (using Lemma \ref{clustlem}) and the lengths of the $ p-1 $ gaps then gives
	\begin{align*}
	l^k|U| &= p(n+(n-1)a_i) + (p-1)(l-n)(1+a_i) + (l-n)l^i N \\
	&= n+(n-1)a_i + (p-1)(n+(n-1)a_i+(l-n)(1+a_i)) + (l-n)l^i N \\
	&= n+(n-1)a_i + (p-1)l^i + (l-n)l^i N,
	\end{align*}
	using Equation \ref{h1} in the last equality.
	Because $ U' $ is separated from $ U $ by a gap of type $ (i,k) $, we then have
	\begin{align*}
	l^k(|U \cup U'| - |U'|) &= l^k|U| + (l-n)(1+a_i) \\
	&= n+(n-1)a_i + (p-1)l^i + (l-n)(1+a_i+l^i N) \\
	&= n+(n-1)a_i + (l-n)(1+a_i) + (p-1)l^i + (l-n) l^i N \\
	&= l^i + (p-1)l^i + (l-n)l^i N \\
	&= l^i (p+(l-n)N),
	\end{align*}
	again using Equation \ref{h1} in the second to last equality.
	Since $ U' $ is a cluster of type $ (i,k) $, by adding the formulas from Lemmas \ref{gaplem} and \ref{clustlem} we obtain
	$$
	l^k(|U \cup U'| - |U|) = n+(n-1)a_i+(l-n)(1+a_i) = l^i,
	$$
	in the same way, so by Equation \ref{lambda}, $ \lambda = 1/(p+(l-n)N) \leq p^{-1} $.
\end{proof}

We are now in a position to prove Proposition \ref{Hpropdec}.
The proof will use the terminology of ``endpoints" of a given $ U \in \mathfrak{C}_i^{(k)} $, by which we mean the left endpoint of the leftmost basic interval in $ U $, and the right endpoint of the rightmost basic interval.

\begin{proof}[Proof of Proposition \ref{Hpropdec}.]
	Since $ O_k \in \mathfrak{B}_k $, the ``$\geq$" inequality is immediate.
	We will prove the reverse inequality by induction on $ k $.
	The base case $ k=1 $ is trivial since there is only one cluster of type $ (1,1) $, namely $ O_1 $.
	Assume the result for $ k $, and take $ U \in \mathfrak{B}_{k+1} $.
	if $ U $ is not consecutive, by filling in the missing basic intervals in $ U $ the density strictly increases, so we may assume that $ U $ is consecutive.
	
	First, consider the case that $ U $ does not contain a gap of type $ (k,k+1) $.
	The only gaps of type $ (k,k+1) $ are those separating the clusters of type $ (k,k+1) $, so $ U $ must lie entirely inside some type-$ (k,k+1) $ cluster, say
	$$
	U \subset \bigcup_{\omega_1 \cdots \omega_k \in I^k} \phi_i \circ \phi_{\omega_1} \circ \cdots \circ \phi_{\omega_k}([0,1])
	$$
	for some $ i \in \{0,1,\ldots,n-1\} $.
	By the blow-up principle $ d(U) = d(\phi_i^{-1}(U)) $, and since $ \phi_i^{-1}(U) \in \mathfrak{B}_k $, we have $ d(U) \leq d(O_k) $ by our inductive hypothesis.
	Because $ \mu(O_k) = 1 $ and $ |O_{k+1}| < |O_k| $ for all $ k $, it follows that $ d(O_k) < d(O_{k+1}) $ and thus $ d(U) < d(O_{k+1}) $, which concludes the proof for this case.
	
	Now suppose that $ U $ contains a gap of type $ (k,k+1) $.
	At least one of the endpoints of $ U $ borders a gap, and as long as $ U \neq O_{k+1} $, it borders a gap of type $ (i_1,k+1) $ for some $ 1 \leq i_1 \leq k $, where $ i_1 $ is chosen minimal.
	Note that we are allowing the possility that $ i_1 = 0 $, i.e. that the gap has zero length.
	Then $ U \in \mathfrak{C}_{i_1}^{(k+1)} $, and since $ U $ contains a type-$ (k,k+1) $ gap, $ U $ contains at least two type-$ (i_1,k) $ clusters.
	If $ U_1 $ is the cluster of type $ (i_1,k) $ that is separated from $ U $ by the type-$ (i_1,k+1) $ gap, by Lemma \ref{mainhlem}, $ d(U) \leq d(U \cup U_1) $.
	
	We now repeat the process: if $ U \cup U_1 $ borders a gap of minimal type $ (i_2,k+1) $, let $ U_2 $ be the type-$ (i_2,k+1) $ cluster that is separated from $ U \cup U_1 $ by this gap, so $ d(U \cup U_1) \leq d(U \cup U_1 \cup U_2) $ again by Lemma \ref{mainhlem}.
	
	This process terminates at a finite stage $ j $ where the endpoints of $ U \cup U_1 \cup \cdots \cup U_j $ do not border any gap of type $ (i,k+1) $ for any $ 1 \leq i \leq k $.
	This is only possible if $ U \cup U_1 \cup \cdots \cup U_j $ is a cluster of maximal type $ (k+1,k+1) $, that is, $ O_{k+1} $.	
	Combining these inequalities gives
	$$
	d(U) \leq d(U \cup U_1) \leq \cdots \leq d(U \cup U_1 \cup \cdots \cup U_j) = d(O_{k+1}).
	$$
\end{proof}

\section{The packing measure of $ C(n,l) $}
\label{Pmeas}
By Equation \ref{dmcinv}, it suffices to show that $ d_{\text{mc}} = 2^{-s} $.
First, a slight improvement to Equation \ref{dmc}: if $ U \subset [0,1] $ is contained in a level-one basic interval, by repeatedly applying the blow-up principle we eventually obtain an interval with the same density that is not contained in any level-one basic interval, so
\begin{multline}
\label{dmc}
d_{\text{mc}} = \inf \left\{ d(U) : U \subset [0,1] \text{ a closed interval centered in } C(n,l) \right. \\
\left. \text{ with } U \not \subset \phi_i([0,1]) \text{ for any } i \in I \right\}.
\end{multline}

To prove that $ d_{\text{mc}} \leq 2^{-s} $, is not difficult to find intervals centered in $ C(n,l) $ with density $ 2^{-s} $.
The trick is to choose intervals half inside $ C(n,l) $ and half outside.
For instance, consider $ [r-l^{-1},r+l^{-1}] $.
Its right half has $ \mu $-measure zero, and the measure of its left half is
$$
\mu([r-l^{-1},r]) = \mu([\phi_{n-2}(r), \phi_{n-1}(r)]) = \mu([\phi_{n-1}(0),\phi_{n-1}(1)]) = n^{-1},
$$
so its density is $ n^{-1}/(2l^{-1})^s = 2^{-s} $.

\begin{lemma}
	\label{packlem}
	$ d([0,y]) \geq 1 $ for all $ 0 < y \leq r $, and $ d([x,r]) \geq 1 $ for all $ 0 \leq x < r $.
\end{lemma}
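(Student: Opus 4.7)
The plan is to exploit the base-$l$ digit expansion of points in $C(n,l)$ together with the subadditivity $(\sum_j t_j)^s \leq \sum_j t_j^s$ of $t \mapsto t^s$ for $s \in (0,1]$. For $y \in C(n,l)$ with $y \leq r$ and expansion $y = \sum_{j \geq 1} a_j l^{-j}$, $a_j \in \{0,1,\ldots,n-1\}$, iterating the self-similar relation $\mu(\phi_{\omega_1} \circ \cdots \circ \phi_{\omega_k}([0,1])) = n^{-k}$ yields $\mu([0,y]) = \sum_{j \geq 1} a_j n^{-j}$: at each level $j$, there are exactly $a_j$ basic intervals with prefix matching $a_1 \cdots a_{j-1}$ and $j$-th digit strictly less than $a_j$, each contributing $n^{-j}$ to $\mu([0,y])$. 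Using $l^s = n$ and hence $l^{-js} = n^{-j}$, subadditivity gives
\begin{equation*}
y^s = \Bigl( \sum_j a_j l^{-j} \Bigr)^s \leq \sum_j (a_j l^{-j})^s = \sum_j a_j^s \, n^{-j} \leq \sum_j a_j \, n^{-j} = \mu([0,y]),
\end{equation*}
where the final inequality uses $a_j^s \leq a_j$ for integer $a_j \geq 0$ and $s \leq 1$.

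If $y \in (0,r] \setminus C(n,l)$, then $y$ lies in some connected component of $[0,1] \setminus C(n,l)$ whose right endpoint $v \in C(n,l)$ satisfies $y \leq v \leq r$. Since this gap carries no $\mu$-mass, $\mu([0,y]) = \mu([0,v])$, and combining with the previous case yields $\mu([0,y]) \geq v^s \geq y^s$.

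For the second inequality $d([x,r]) \geq 1$ I argue symmetrically, using the complementary digits $c_j = n - 1 - a_j$. For $x \in C(n,l) \cap [0,r)$ with $x = \sum_j a_j l^{-j}$, the identities $r = \sum_j (n-1) l^{-j}$ and $1 = \sum_j (n-1) n^{-j}$ give $r - x = \sum_j c_j l^{-j}$ and $\mu([x,r]) = 1 - \mu([0,x]) = \sum_j c_j n^{-j}$, so the same subadditivity computation applied to the $c_j$ produces $(r-x)^s \leq \mu([x,r])$. For $x \in [0,r) \setminus C(n,l)$, the left endpoint $u \in C(n,l)$ of the gap containing $x$ satisfies $u \leq x$ and $\mu([x,r]) = \mu([u,r])$, hence $\mu([x,r]) \geq (r-u)^s \geq (r-x)^s$ by the previous case.

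I do not anticipate any serious obstacle: the entire argument is a clean application of the subadditivity of $t \mapsto t^s$ to the base-$l$ digit expansion, with the identity $l^{-js} = n^{-j}$ making the $l$-adic expansion of $y$ (resp.\ of $r - x$) and the $n$-adic expansion computing $\mu([0,y])$ (resp.\ $\mu([x,r])$) line up exactly, while the gap cases are absorbed by the monotonicity of $\mu([0,\cdot])$ and $\mu([\cdot,r])$ across the complementary intervals.
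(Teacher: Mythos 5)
Your proof is correct, but it takes a genuinely different route from the paper. The paper follows Feng's variational scheme: it defines $D_1 = \inf\{d([0,y]) : 0 < y \leq r\}$, uses continuity of $y \mapsto d([0,y])$ to obtain a minimizer $y_0$, and then a splitting/concavity argument combined with the blow-up principle forces $y_0 = \phi_i(0)$ for some $i \geq 1$, where the density is computed to be $i^{1-s} \geq 1$ (with an extra case in the $[x,r]$ direction to rule out $x_0$ landing in $\phi_i((r,1])$). You instead compute $\mu([0,y])$ in closed form from the base-$l$ digits, $\mu([0,y]) = \sum_j a_j n^{-j}$, and compare it to $y^s = (\sum_j a_j l^{-j})^s$ via subadditivity of $t \mapsto t^s$ and the identity $l^{-js} = n^{-j}$; the reflection $c_j = n-1-a_j$ handles the $[x,r]$ case, and the gap cases reduce to the Cantor-set case by monotonicity. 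Your argument is more elementary and entirely explicit --- it needs no extremal point and no continuity --- but it leans on the homogeneity of this particular IFS (all ratios equal to $l^{-1}$, digit set an initial segment $\{0,\ldots,n-1\}$), which is what makes the two expansions line up; note also that since $l-1 \geq n$, points of $C(n,l)$ have a unique admissible expansion, so your formula for $\mu([0,y])$ is unambiguous. The paper's variational approach is less computational and adapts more readily to self-similar sets with unequal contraction ratios, where no such clean digit formula is available.
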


\begin{proof}
	The proof is a variant of Lemma 3.1 from \cite{Feng}.
	Let $ D_1 = \inf \{ d([0,y]) : 0 < y \leq r \} $.
	Because $ y \mapsto d([0,y]) $ is continuous, there exists $ y_0 $ such that $ D_1 = d([0,y_0]) $.
	Certainly $ y_0 $ is in some level-one basic interval, say $ \phi_i([0,1]) $, and by Equation \ref{dmc} we may assume that $ i \geq 1 $.
	We claim that $ y_0 = \phi_i(0) $.
	
	Suppose by contradiction that $ y_0 > \phi_i(0) $, and let $ u = y_0-\phi_i(0) $.
	Then
	\begin{multline*}
	D_1 = \frac{\mu([0,\phi_i(0)])+\mu([\phi_i(0),\phi_i(0)+u])}{(\phi_i(0)+u)^s}
	\geq \frac{\mu([0,\phi_i(0)])+\mu([\phi_i(0),\phi_i(0)+u])}{\phi_i(0)^s + u^s} \\
	\geq \min \left\{ \frac{\mu([0,\phi_i(0)])}{\phi_i(0)^s}, \frac{\mu([\phi_i(0),\phi_i(0)+u])}{u^s} \right\}
	= \min \left\{ d([0,\phi_i(0)]) , d([0,\phi_i^{-1}(u)]) \right\},
	\end{multline*}
	which contradicts the minimality of $ D_1 $.
	
	As a result, 
	$$ 
	D_1 = \min_{1 \leq i \leq n-1} d([0,\phi_i(0)]) = \min_{1 \leq i \leq n-1} \frac{i/l^s}{(i/l)^s} = \min_{1 \leq i \leq n-1} i^{1-s} = 1.
	$$
	
	A similar argument works for the second inequality, but an extra case is needed so we give the details.
	Let $ D_2 = \inf \{ d([x,r]) : 0 \leq x < r \} $, and choose $ x_0 \in \phi_i([0,1])  $ for some $ i \leq n-2 $ (by Equation \ref{dmc}) with $ D_2 = d([x_0,r]) $.
	We claim that $ x_0 = \phi_i(r) $.
	
	If $ x_0 < \phi_i(r) $ we obtain a contradiction in a similar way as in the above argument; setting $ u = \phi_i(r)-x_0 $ and writing $ [x_0,r] $ as the union of $ [\phi_i(r)-u,\phi_i(r)] $ and $ [\phi_i(r),r] $, we can show that 
	$$ 
	D_2 \geq \min \{ d([r-\phi_i^{-1}(u),r]), d([\phi_i(r),r]) \}
	$$ 
	which contradicts minimality. 
	But unlike in the argument for $ D_1 $, we cannot conclude from this that $ x_0 = \phi_i(r) $, because there remains the possibility that $ x_0 \in \phi_i((r,1]) $.
	In this case,
	$$
	d([\phi_i(r),r]) = \frac{\mu([\phi_i(r),x_0])+\mu([x_0,r])}{(r-\phi_i(r))^s}.
	$$
	Notice that $ \mu([\phi_i(r),x_0]) \leq \mu([\phi_i(r),\phi_i(1)]) = \mu([r,1])=0 $, so that
	$$
	d([\phi_i(r),r]) = \frac{\mu([x_0,r])}{(r-\phi_i(r))^s} <  \frac{\mu([x_0,r])}{(r-x_0)^s} = d([x_0,r]) = D_2
	$$
	which also contradicts minimality of $ D_2 $.
	
	As a result,
	$$
	D_2 = \min_{0 \leq i \leq n-2} d([\phi_i(r),r]) = \min_{0 \leq i \leq n-2} \frac{(n-1-i)/l^s}{((n-1-i)/l)^s} = \min_{0 \leq i \leq n-2} (n-1-i)^{1-s} = 1,
	$$
	and this concludes the proof.
\end{proof}

\begin{corollary}
	$ d_{\emph{mc}} \geq 2^{-s} $.
\end{corollary}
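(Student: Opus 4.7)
Our goal is $d(U) \geq 2^{-s}$, equivalently $\mu(U) \geq t^s$, for every closed interval $U = [c-t, c+t] \subset [0,1]$ centered at some $c \in C(n,l)$ with $U \not\subset \phi_i([0,1])$ for any $i \in I$ (per Equation \ref{dmc}). The plan is to split into cases based on the position of $U$ relative to $\mathrm{supp}(\mu) = [0,r]$, reducing each case to Lemma \ref{packlem} via self-similarity.

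If $c + t > r$, then $c - t < c \leq r$, so $\mu(U) = \mu([c-t, r])$ and Lemma \ref{packlem} gives $\mu(U) \geq (r - c + t)^s \geq t^s$, using $c \leq r$. Otherwise $U \subset [0,r]$, and since $U$ is not in a single level-one basic interval, $U$ meets a consecutive block $\phi_{j_0}([0,1]), \ldots, \phi_{j_1}([0,1])$ with $j_1 > j_0$. The interior basic intervals ($j_0 < i < j_1$) sit fully inside $U$ and contribute $1/n$ each to $\mu(U)$. Writing $x_0 = l(c-t) - j_0$ and $y_1 = l(c+t) - j_1$, the endpoint intersections $U \cap \phi_{j_0}([0,1])$ and $U \cap \phi_{j_1}([0,1])$ rescale via $\phi_{j_0}^{-1}$ and $\phi_{j_1}^{-1}$ to $[x_0, 1]$ and $[0, y_1]$ respectively; by self-similarity and Lemma \ref{packlem} (noting that $\mu([x_0,1]) = \mu([x_0, r])$ when $x_0 \leq r$ and vanishes otherwise) their $\mu$-measures are at least $(r - x_0)_+^s/n$ and $\min(y_1, r)^s/n$. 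The target bound $\mu(U) \geq t^s$ thus reduces to an elementary inequality among $j_1 - j_0$, $x_0$, $y_1$, $r$, and $s$, subject to $lt = (j_1 - j_0 + y_1 - x_0)/2$.

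For $j_1 - j_0 \geq 3$ the interior contribution alone suffices, since $j_1 - j_0 - 1 \geq ((j_1 - j_0 + 1)/2)^s \geq (lt)^s$. The main technical obstacle lies in the cases $j_1 - j_0 \in \{1, 2\}$, where one must exploit the constraint $c \in C(n,l) \subset \phi_{j_0}(C(n,l)) \cup \cdots \cup \phi_{j_1}(C(n,l))$. This membership, combined with the requirement that $U$ reach all the way from $\phi_{j_0}$ to $\phi_{j_1}$, forces either $t \geq (1-r)/l$, in which case the subadditivity $u^s + v^s \geq (u+v)^s$ (valid for $s < 1$) applied to the Lemma bounds closes the inequality, or $x_0 + y_1 \geq 1$, in which case one endpoint piece alone already contributes more than $t^s$. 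The sub-cases where an endpoint $c \mp t$ falls beyond $\phi_{j_0}(r)$ or $\phi_{j_1}(r)$, so that the corresponding endpoint measure vanishes, are handled analogously: the constraint $c \in C(n,l)$ then pushes the opposite endpoint deep enough into the attractor to recover the bound.
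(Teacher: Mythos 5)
Your proposal is correct, but it takes a genuinely different route from the paper. The paper argues by finite induction on the number of level-one basic intervals meeting $U=[a,b]$: it peels off the leftmost such interval $\phi_j([0,1])$ (when the midpoint $c$ is not in it), splits $U$ at $\phi_j(r)$ and at the reflected point $2c-\phi_j(r)$ so that the middle piece $[\phi_j(r),2c-\phi_j(r)]$ is again centered in $C(n,l)$ and meets fewer level-one intervals, and then combines the blow-up principle, Lemma \ref{packlem}, and subadditivity of $t\mapsto t^s$. You instead perform a one-shot decomposition of $U$ over the level-one intervals it meets and verify the inequality $\mu(U)\geq t^s$ by explicit arithmetic in each case. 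Your reductions check out: the interior intervals alone settle $j_1-j_0\geq 3$ (and in fact $j_1-j_0=2$ needs no constraint on $c$ at all, since the interior mass $1/n$ plus the right-hand piece always suffices); for $j_1-j_0=1$ the dichotomy you state is exactly the split $c\in\phi_{j_0}(C)$ versus $c\in\phi_{j_1}(C)$, and in each branch a single endpoint piece already carries the bound --- if $c\in\phi_{j_0}(C)$ then $x_0\leq r-lt$ so $(r-x_0)^s\geq(lt)^s$, while if $c\in\phi_{j_1}(C)$ then $x_0+y_1\geq 1$ gives $y_1\geq lt$, with the saturation $\mu([0,y_1])=1$ handling $y_1>r$. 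The trade-off is that the paper's induction is uniform in the number of intervals and shorter to write in full, whereas your argument is more elementary and explicit but fragments into sub-cases whose verifications you assert rather than carry out; since those assertions are all true, I count this as a complete alternative proof in sketch form rather than a gap. One small slip: the containment $C(n,l)\subset\phi_{j_0}(C(n,l))\cup\cdots\cup\phi_{j_1}(C(n,l))$ is false as written; what you need (and clearly intend) is only that $c\in U\cap C(n,l)$ forces $c$ into one of these $j_1-j_0+1$ first-level copies.
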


\begin{proof}
	The proof is an adaptation of Proposition 3.6 from \cite{Feng} to our setting.
	Let $ U \subset [0,1] $ be a closed interval centered in $ C(n,l) $ with $ U \notin \phi_i([0,1]) $ for all $ i \in I $.
	We will show that $ d(U) \geq 2^{-s} $ by finite induction on the number of level-one basic intervals that intersect $ U $.
	
	First, suppose that $ U = [a,b] $ intersects just one, say $ \phi_j([0,1]) $.
	By our assumption $ U \not\subset \phi_j([0,1]) $, so necessarily $ j = n-1 $ and $ b $ lies in the level-one gap $ (r,1] $.
	Because $ U $ is centered in $ C $, we have $ b-r \leq r-a $, so that
	$$
	d(U) = \frac{\mu([a,r])}{((b-r)+(r-a))^s} \geq \frac{\mu([a,r])}{2^s(r-a)^s} = 2^{-s}d([a,r]) \geq 2^{-s},
	$$
	using Lemma \ref{packlem}.
	
	Now assume the result for all intervals that intersect $ k $ or fewer level-one basic intervals with $ 1 \leq k \leq n-1 $, and suppose that $ U = [a,b] $ intersects $ k+1 $ of these intervals.
	The midpoint $ c = (a+b)/2 $ lies in one of them; for the moment suppose that it does not lie in the leftmost $ \phi_j([0,1]) $.
	Then
	\begin{align*}
	d(U) &= \frac{\mu([a,\phi_j(r)])+\mu([\phi_j(r),2c-\phi_j(r)])+\mu([2c-\phi_j(r),b])}{((\phi_j(r)-a)+(2c-2\phi_j(r))+(b-(2c-\phi_j(r))))^s} \\
	&> \frac{\mu([a,\phi_j(r)])+\mu([\phi_j(r),2c-\phi_j(r)])}{(2(\phi_j(r)-a)+(2c-2\phi_j(r)))^s} \\
	&\geq \frac{\mu([a,\phi_j(r)])+\mu([\phi_j(r),2c-\phi_j(r)])}{2^s(\phi_j(r)-a)^s+(2c-2\phi_j(r))^s} \\
	&\geq \min \left\{ \frac{\mu([a,\phi_j(r)])}{2^s(\phi_j(r)-a)^s}, \frac{\mu([\phi_j(r),2c-\phi_j(r)])}{(2c-2\phi_j(r))^s} \right\} \\
	&= \min \left\{ 2^{-s} d([a,\phi_j(r)]), d([\phi_j(r),2c-\phi_j(r)]) \right\}.
	\end{align*}
	Note that $ d([a,\phi_j(r)]) = d([\phi_j^{-1}(a),r]) \geq 1 $ by the blow-up principle and Lemma \ref{packlem}, and because $ [\phi_j(r),2c-\phi_j(r)] $ is centered in $ C $ and intersects $ \leq k $ level-one basic intervals, $ d([\phi_j(1),2c-\phi_j(1)] \geq 2^{-s} $ by our inductive hypothesis.
	Together, these imply that $ d(U) \geq 2^{-s} $.
	
	The proof for the case that the midpoint does not lie in the rightmost interval is similar, so we omit it.
\end{proof}

\end{document}